\title{\LARGE \bf
Guaranteed Robust Nonlinear MPC via Disturbance Feedback
}
  \author{
  }%
\author{Antoine P. Leeman$^{1}$ \and Johannes Köhler$^{1}$ \and Melanie N. Zeilinger$^{1}$
\thanks{This work has been supported by the European Space Agency under OSIP 4000133352, the Swiss Space Center, and the Swiss National Science Foundation under NCCR Automation (grant agreement 51NF40 180545).}
\thanks{$^1$ Institute for Dynamic Systems and Control,
ETH Zürich, Zürich 8053, Switzerland. Email: \texttt{\{aleeman, jkoehle, mzeilinger\}@ethz.ch}}
}
\DeclareAcronym{RMPC}{
  short = RMPC,
  long  = robust model predictive control,
}
\DeclareAcronym{SLS}{
  short = SLS,
  long  = system level synthesis,
}
\DeclareAcronym{MPC}{
  short = MPC,
  long  = model predictive control,
}
\DeclareAcronym{LTV}{
  short = LTV,
  long  = linear time-varying,
}
\newcommand{\Z}{\mathbf{z}}
\newcommand{\V}{\mathbf{v}}
\renewcommand{\P}{ \Phi}
\newcommand{\Px}{\P^\x}
\newcommand{\Pu}{\P^\u}
\newcommand{\nx}{{n_\mathrm{x}}}
\renewcommand{\nu}{{n_\mathrm{u}}}
\newcommand{\nw}{{n_\mathrm{x}}}
\newcommand{\R}{\mathbb{R}}
\newcommand{\sv}[2]{\left(#1,#2\right)}
\newcommand{\T}{^\top}
\newcommand{\Xf}{\mathcal{X}_\f}
\newcommand{\x}{\mathrm{x}}
\renewcommand{\u}{\mathrm{u}}
\newcommand{\f}{\mathrm{f}}
\newcommand{\tube}{\bm \tau}
\newcommand{\B}{ \mathcal{B}}
\newcommand{\defmath}{\vcentcolon =}
\newtheorem{remark}{Remark}
\newtheorem{proposition}{Proposition}
\newtheorem{theorem}{Theorem}
\newtheorem{assumption}{Assumption}
\DeclareAcronym{NLP}{
  short = NLP,
  long  = nonlinear programming,
}
\DeclareAcronym{NMPC}{
  short = NMPC,
  long  = nonlinear model predictive control,
}
\DeclareAcronym{RNMPC}{
  short = RNMPC,
  long  = robust nonlinear model predictive control,
}
\DeclareAcronym{ISS}{
  short = ISS,
  long  = input-to-state stability,
}
\DeclareAcronym{RPI}{
  short = RPI,
  long  = robust positively invariant,
}
\DeclareAcronym{QP}{
  short = QP,
  long  = quadratic program,
}
\DeclareAcronym{SOCP}{
  short = SOCP,
  long  = second-order cone program,
}
\DeclareAcronym{SCP}{
  short = SCP,
  long  = sequential convex programming,
}
\DeclareAcronym{LQR}{
  short = LQR,
  long  = linear quadratic regulator,
}
\DeclareAcronym{iLQR}{
  short = iLQR,
  long  = iterative linear quadratic regulator,
}
\DeclareAcronym{ESA}{
  short = ESA,
  long  = European Space Agency,
}
\DeclareAcronym{GPU}{
  short = GPU,
  long  = graphics processing unit,
}
\DeclareAcronym{JIT}{
  short = JIT,
  long  = just-in-time,
}
\DeclareAcronym{LTI}{
  short = LTI,
  long  = linear time-invariant,
}
\DeclareAcronym{RCI}{
  short = RCI,
  long  = robust control invariant,
}
\DeclareAcronym{RL}{
  short = RL,
  long  = reinforcement learning,
}
\DeclareAcronym{RTI}{
  short = RTI,
  long  = real-time iteration,
}
\DeclareAcronym{SQP}{
  short = SQP,
  long  = sequential quadratic programming,
}
\begin{document}

\maketitle
\thispagestyle{empty}
\pagestyle{empty}

\begin{abstract}
Robots must satisfy safety-critical state and input constraints despite disturbances and model mismatch.
We introduce a \ac{RMPC} formulation that is fast, scalable, and compatible with real-time implementation.
Our formulation guarantees robust constraint satisfaction, \ac{ISS} and recursive feasibility.
The key idea is to decompose the uncertain nonlinear system into (i) a nominal nonlinear dynamic model, (ii) disturbance-feedback controllers, and (iii) bounds on the model error. These components are optimized jointly using sequential convex programming. 
The resulting convex subproblems are solved efficiently using a recent disturbance-feedback {MPC} solver.
The approach is validated across multiple dynamics, including a rocket-landing problem with steerable thrust. An open-source implementation is available at  \ifthenelse{\boolean{anonymize}}{\url{https://anonymous.4open.science/r/robust-nonlinear-mpc/}.}{\url{https://github.com/antoineleeman/robust-nonlinear-mpc}.}
\end{abstract}
\acresetall % reset acronyms definitition

\section{Introduction}
Autonomous robots, whether agile drones, wheeled machines, or (autonomous) spacecrafts, must operate in dynamic and uncertain environments while satisfying strict safety and performance requirements~\cite{brunke2022safe}.
In addition, model mismatch arises naturally due to many factors, such as wind gusts, actuators misalignments, or unmodelled frictions.
In robotics applications, disturbances such as wind gusts, actuator misalignments, or unmodeled friction are typically handled by introducing ad hoc safety margins in the control design, resulting in slower motions, reduced maneuverability, and under-utilization of the system’s capabilities.

\Ac{RL}, often with domain randomization, has recently shown success in achieving robust sim2real performance\cite{jenelten2024dtc}, particularly in contact-rich tasks. While learned policies can be executed in real time, training  requires extensive offline computation, careful reward design, and heuristics to ensure convergence.

\begin{figure}[ht!]
    \centering
    \includegraphics[width = \linewidth]{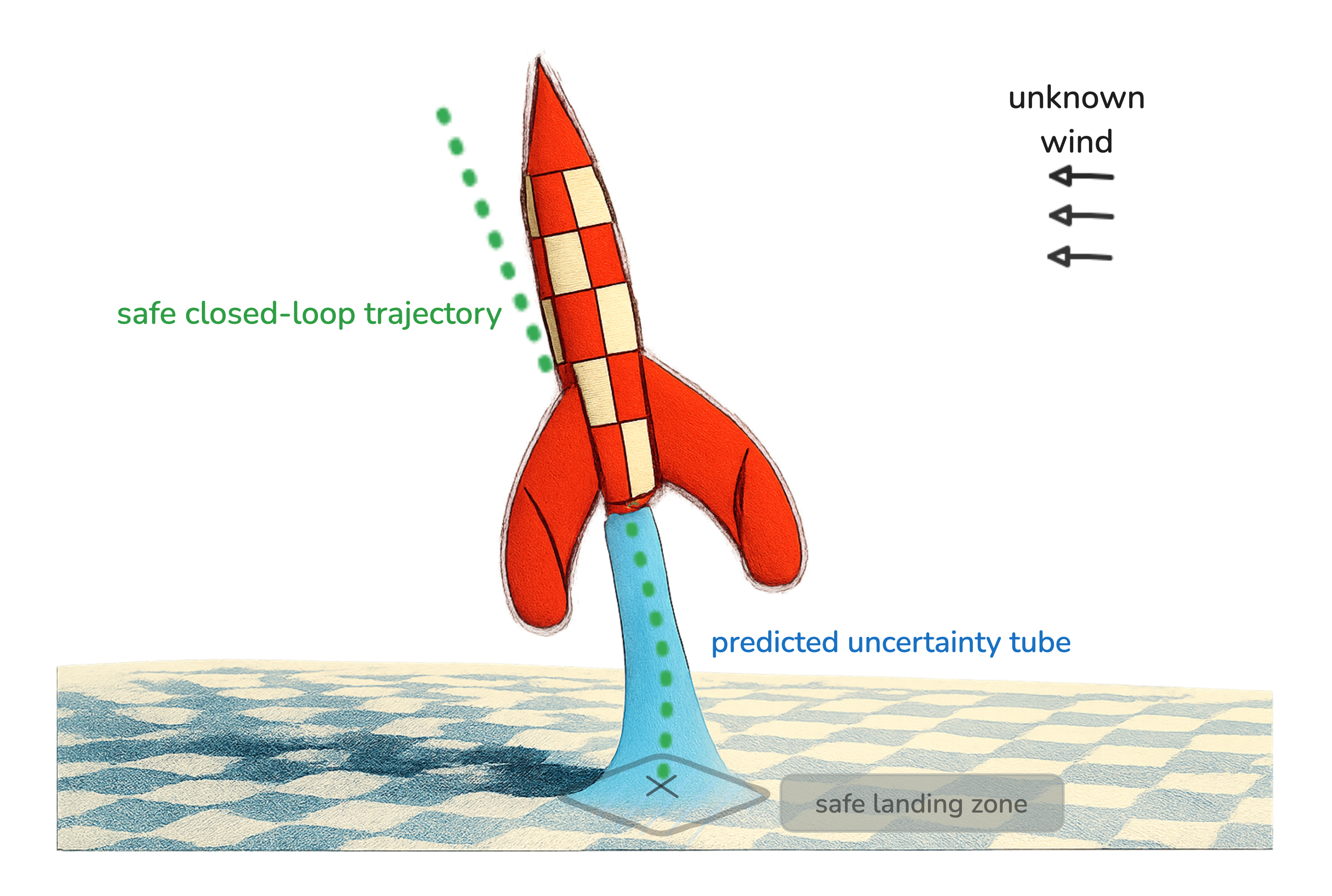}
    \caption{%
    Illustration of \ac{RMPC} for real-time rocket control with steerable thrust in a safe landing zone.
    The formulation jointly optimizes a nominal trajectory and  uncertainty tubes within the constraints to ensure safety. The figure is based on an edited MuJoCo rendering.}
    \label{fig:tintin}
\end{figure}
In contrast, trajectory-optimization methods enforce constraint satisfaction for nonlinear dynamics and are widely used in practice as a model-based control technique based on \ac{SCP}~\cite{malyuta2022convex}.
However, these methods typically do not ensure safety or stability in the presence of disturbances, which is critical for real-world deployment.

In this paper, we introduce a scalable \ac{RMPC} formulation for nonlinear systems that is safe-by-design.
\Ac{RMPC} commonly accounts for disturbances by predicting a set containing all possible future states~\cite{rawlings2017model}. 
To reduce conservatism, these robust predictions are based on closed-loop predictions and a corresponding feedback law is typically optimized offline, e.g., using contraction metrics~\cite{singh2023robust, zhao2022tube}.  
However, fixing the feedback a priori can limit closed-loop performance and the offline computations also limit scalability. 
\Ac{RMPC} approaches that optimize feedback laws to reduce conservatism have been proposed in~\cite{Messerer2021AnFeedback, Kim2022JointNonlinearities}, which rely on (conservative) sequential over-approximations of the robust predictions. 
In contrast, the disturbance feedback MPC~\cite{GOULART2006523} framework  framework (also known as \ac{SLS}~\cite{anderson2019system}) provides an exact characterization of the robust prediction for \ac{LTV} systems, thereby avoiding this compounding effect. Recent extensions~\cite{leeman2025robust_TAC, leister2024robust} further enable its application to nonlinear systems.
While these formulations improve performance compared to fixed policy approaches, they generally do not provide guarantees of recursive feasibility or stability. However, such guarantees are crucial, since loss of feasibility at any step can cause safety constraint violations.
\textbf{Contribution}: Building on the nonlinear SLS formulation in~\cite{leeman2025robust_TAC}, 
we propose a fast and scalable \ac{RMPC} formulation for nonlinear systems with robust closed-loop guarantees. 
Our approach jointly optimizes the nominal nonlinear trajectory, a disturbance-feedback controller, and an upper bound on the prediction error.

\begin{itemize}
    \item Formal guarantees are provided, i.e., robust constraint satisfaction, recursive feasibility (Thm. \ref{thm:rec_feas}), and \ac{ISS} (Thm. \ref{thm:iss}). Recursive feasibility is ensured by a novel treatment of the mismatch with respect to the nonlinear nominal prediction. 
\item An efficient \ac{SCP} algorithm tailored to the robust MPC formulation is provided to enable real-time deployment. Each iteration consists of solving a nominal trajectory optimization with a \ac{QP}, updating a disturbance-feedback controller via Riccati recursions, and evaluating Jacobians of the nonlinear dynamics. The design is general and  the provided code can be directly applied to systems with large state and input dimensions and long prediction horizons.
\item Real-time feasibility (computation times) is demonstrated across different dynamics, including a quadcopter and a rocket landing. Robust performance is validated on the rocket-landing problem with steerable thrust including actuator dynamics, illustrated in Fig.~\ref{fig:tintin}, demonstrating robust constraint satisfaction with an average total latency of 19.7 \texttt{[ms]} per iteration.
A comparison to a soft-constrained MPC baseline highlights increased safety and stability of the proposed approach.
\end{itemize}

\textbf{Notation}:
For vectors or matrices $a$ and $b$ with the same number of rows, we denote their horizontal concatenation by $[a,~b]$. 
We denote stacked vectors or matrices by $\sv{a}{b} = [a\T,~b\T]\T$. For a vector $r\in \R^n$, we denote its $i^\text{th}$ component by $r_i$. 
For a sequence of matrices $M_{k,j}\in\R^{p\times q}$, indexed by $k > j\ge 0$, 
we define the shorthand horizontal concatenation $
M_{(k)} \defmath [M_{k,k-1},~M_{k,k-2},~\dots,~M_{k,0}] \in \R^{p\times kq}.$
For a vector $v\in\R^n$, we write its 1-norm as $\|v\|_1 = |v_1| + \ldots + |v_n|$ and its infinity norm as $\|v\|_\infty = \max_{i=1,\ldots,n} |v_i|$.
For a matrix $M\in\R^{m\times n}$, the matrix infinity norm is $\|M\|_\infty=\max_i \sum_j |M_{ij}|$.
We denote sets with calligraphic letters, e.g., $\mathcal{W}\subseteq \R^n$. 
Let $\B^{m}$ be the unit ball defined by $\B^{m} \defmath \{d\in\R^m|~ \|d\|_\infty \le 1\}$. The Minkowski sum of two sets $\mathcal{A}, \mathcal{D} \subseteq \R^n$ is defined as $\mathcal{A} \oplus \mathcal{D} \defmath \{a + d | a \in \mathcal{A}, d \in \mathcal{D} \}$.
For a vector-valued function $f: \mathbb{R}^n \rightarrow \mathbb{R}^q$, we denote the Jacobian by $\frac{\partial \phi}{\partial x}|_x \in \R^{q\times n}$. For the $i^\text{th}$ component $(f)_i$, we denote its Hessian by
$\frac{\partial^2 (f)_i}{\partial x^2}|_x \;\in\; \R^{n \times n}$.\section{Problem Setup}

We consider a discrete-time nonlinear system
\begin{equation}
    x(t+1)= f(x(t),u(t)) + E(x(t), u(t))\cdot w(t),~x(0)=\bar x,\label{eq:nonlinear_dyn}
\end{equation}
with state $x(t) \in \R^\nx$, input $u(t) \in \R^\nu$, invertible disturbance matrix $E(x,u) \in  \R^{\nx \times \nw}$, time $t\in\mathbb{N}_0$, and norm-bounded disturbance $w(t)\in\B^\nw$ with $\B^\nw$ the unit-norm ball.
We assume that the dynamics $f: \R ^{\nx} \times \R^{\nu} \rightarrow \R^\nx$ are two times continuously differentiable and the state $x(t)$ can be measured. We impose point-wise in time state and input constraints
\begin{equation}
    {(x(t),u(t)) \in \mathcal{C}},\quad \forall t\in\mathbb{N}_0\label{eq:prob_form_cons}
\end{equation}
that should be satisfied for any realization of the disturbance $w(t)\in \B^\nw$. The constraint set $\mathcal{C}$ is given by a compact polytope
\begin{equation}
\begin{aligned}
    \mathcal{C}& \defmath  \{(x,u)\in \R^{\nx + \nu}|~c_i\T(x,u) + b_i \le 0, ~ i = 1, \ldots, n_\textrm{c}\}.\label{eq:constraints}
    \end{aligned}
\end{equation}
with $c_i \in \R^{\nx + \nu}$, and $b_i \in \R$ and $0 \in \mathcal{C}$.
We consider the problem of stabilizing the origin with $f(0,0)=0$ and a quadratic cost with positive definite matrices $Q,R$:
\begin{equation}
\label{cost:setup}
\ell(x,u)=x^\top Q x+ u^\top R u.
\end{equation}

In the next section, we introduce the finite-horizon optimization problem solved at each time step, which optimizes affine disturbance-feedback controllers, so-called tube controllers, of the form
\begin{equation}
    u_k = \pi_k(x_0, \ldots, x_k), \quad k = 0,\ldots,T-1,
\end{equation}
and ensures robust constraint satisfaction.
This construction naturally yields two layers of feedback: (i) an outer \ac{MPC} feedback from re-solving the optimization at each sampling time, and (ii) an inner tube-controller for the disturbance propagation.

\section{Nonlinear robust predictions with disturbance feedback}
\label{sec:rmc}
In this section, the proposed robust MPC formulation is presented.
In Section~\ref{sec:sls}, we parameterize tube-controllers using disturbance feedback and leverage existing results \cite{leeman2023robust_CDC} in over-approximating (disturbance) reachable sets of nonlinear systems. In Section~\ref{sec:finite_horizon_robust_nonlinear_optimal_control}, we formulate the corresponding joint optimization problem.

\subsection{Robust reachable sets}
\label{sec:sls}
First, we derive a robust reachable set over the finite prediction horizon $T\in\mathbb{N}_0$. 
Our robust prediction leverages a nominal system
\begin{equation}
    z_{k+1} = f(z_k, v_k),\quad k =0, \ldots, T-1
    \label{eq:nom_nonlinear}
\end{equation}
where $z_k\in\R^\nx$ is the nominal state, and $v_k\in\R^\nu$ is the nominal input. 
The linearization around the nominal dynamics is defined as:\begin{equation}
\begin{gathered}
A(z, v) \;\defmath\;
  \left.\frac{\partial f}{\partial x}\right|_{\substack{x=z \\ u=v}}, \quad
B(z, v) \;\defmath\;
  \left.\frac{\partial f}{\partial u}\right|_{\substack{x=z \\ u=v}}. \\[6pt]
\end{gathered}
\label{eq:jac}
\end{equation}
We introduce an \ac{LTV} approximation of the nonlinear system~\eqref{eq:nonlinear_dyn}
\begin{equation}
\label{eq:LTV_aux}
    \widehat{\Delta x}^+= A(z,v)\Delta x+B(z,v)\Delta u,
\end{equation}
where $\widehat{\Delta x}$ is the approximate prediction and $\Delta x,\Delta u$ denote the deviations from the nominal trajectory. 

\begin{assumption}
\label{assum:sigma}
    The error between the true system~\eqref{eq:nonlinear_dyn} and its \ac{LTV} approximation \eqref{eq:LTV_aux} can be characterized by a function $\sigma: \R^{\nx} \times \R^\nu \times \R \rightarrow \R^\nx$ satisfying 
\begin{equation}
\begin{aligned}
&\bigl| f(z,v)+\widehat{\Delta x}^+-f(x,u) \bigr| \;\le\; \sigma(z,v,\tau)
\end{aligned}
\label{eq:LTV_uncertain}
\end{equation}
for all $(x,u), (z,v) \in \mathcal{C}$, $w\in \B^{\nw}$ with
 $\tau\in \R$ satisfying
\begin{equation}
    \left\|
    \begin{bmatrix}
        \Delta x \\
        \Delta u
    \end{bmatrix}
    \right\|_\infty \le \tau,\quad \Delta x = x-z,\quad \Delta u = u-v.
    \label{eq:tau_definition}
\end{equation}
\end{assumption}

Intuitively, the function $\sigma(z,v,\tau)$ captures how much the nonlinear dynamics can deviate from the linearized model within a neighborhood of radius $\tau$. 
\begin{remark}
\label{rem:linearization_error}
A natural choice for $\sigma(z,v, \tau)$ is the linearization error in its Lagrange form, as used in~\cite{leeman2025robust_TAC}.
Consider the Hessian of the $i^\text{th}$ component of $f$, i.e.,
\begin{equation}
H_i(\xi, \tilde w) \;=\;
\left.\frac{\partial^2 ( f(x,u) + E(x,u)w)_i}{\partial y^2}\right|_{y=\xi, w = \tilde w},
\end{equation}
with $y = (x,u)$, and where $\xi\in \mathbb{R}^{n_\mathrm{x}+n_\mathrm{u}}$ lies in the convex hull of the linearization point $(z,v)$ and the evaluation point $(x,u)$. We define the curvature bound $\mu_i\in\mathbb{R}^{n_\mathrm{x} \times n_\mathrm{x} }$
\begin{equation}
    \mu_i \defmath \tfrac{1}{2} 
    \max_{\substack{\xi \in \mathcal{C}, \tilde w \in \mathcal{B}^{\nx} \\ \|h\|_\infty \le 1}}
    |h^\top H_i(\xi, \tilde w) h|.
    \label{eq:def_mu}
\end{equation}
Then, \begin{equation}
    \sigma_i(z ,v, \tau ) =  \tau^2\mu_i + \|e_i^\top E(z,v)\|_1,
    \label{eq:rem_def}
\end{equation}
satisfies~\eqref{eq:LTV_uncertain}, where $e_i$ is the $i^\text{th}$ vector of the unit basis, and  $\sigma_i(z,v,\tau)$ is the $i^\text{th}$ component of $\sigma(z,v,\tau)$.
\end{remark}
Instead of the analytic over-approximation in \eqref{eq:rem_def}, $\sigma$ can also directly be tuned offline using data and sampling-based methods, analogous to domain randomization in \ac{RL}.

The reformulation \eqref{eq:LTV_uncertain}, \eqref{eq:tau_definition} is at the center of the presented \ac{RMPC} formulation, as it will enable the joint optimization of the nominal nonlinear trajectory~\eqref{eq:nom_nonlinear}, the disturbance-feedback controller based on the \ac{LTV} dynamics~\eqref{eq:LTV_aux} and the (time-varying) overbound $\bm \tau$.

We introduce an affine causal error feedback for the \ac{LTV} error dynamics \eqref{eq:LTV_aux} of the form
\begin{align}
    u_0 &= v_0 + \psi^\u_0 \label{eq:affine_contr}\\
    u_k &= v_k + \psi^\u_k + \sum_{j=0}^{k-1} K_{k,j} (\Delta x_j- \psi^\x_j),~k = 1,\ldots,T-1,\nonumber
\end{align}
where $K_{k,j} \in \R^{\nu \times \nx}$ are online optimized disturbance-feedback gains, and the auxiliary variables ${\psi}^\x_k \in \R^\nx$, ${\psi}^\u_k \in \R^\nu$ evolve according to the \ac{LTV} dynamics
\begin{align}
\psi^\x_{0} &= \Delta \x_0 = x(t) - z_0    \label{eq:nom_dyn}
\\
\psi^\x_{k+1} &= A(z_k, v_k) \psi^\x_{k} + B(z_k, v_k) \psi^\u_{k},~k = 0,\ldots,T-1.\nonumber
\end{align}
The following proposition, adapted from~\cite{leeman2023robust_CDC}, presents a parameterization of the affine disturbance feedback and robust reachable set for the nonlinear dynamics \eqref{eq:nonlinear_dyn}.
\begin{proposition}
    \label{prop:slp}
Consider a nominal nonlinear trajectory $(\Z,\V)$ satisfying~\eqref{eq:nom_nonlinear}, and auxiliary variables ${\psi}^\x_k \in \R^\nx$, ${\psi}^\u_k \in \R^\nu$ defined according to \eqref{eq:nom_dyn}.  The matrices  $\Px_{k,j}\in \R^{\nx\times \nw}$, $\Pu_{k,j}\in \R^{\nu\times \nw}$ satisfy
\begin{equation}
\label{eq:tube_propagation}
\begin{aligned}
    \Px_{j+1,j} &= \mathrm{diag}\left( \sigma(z_j,v_j,\tau_j)\right)\\
\Px_{k+1,j} &= A(z_k,v_k) \Px_{k,j} + B(z_k,v_k)\Pu_{k,j},\\
\end{aligned}
\end{equation}
$j=0,\ldots,T-1$, $k=j+1,\ldots, T-1$,
with the overbound
\begin{equation}
    \|[\P_{(k)},\psi_k]\|_\infty\le \tau_k,~k=0,\ldots,T-1,
\end{equation}
and $\psi_k \defmath (\psi^\x_k, \psi^\u_k)$ and $\Phi_{(k)} \defmath (\Phi_{(k)}^\x, \Phi_{(k)}^\u)$.
Then, the system \eqref{eq:nonlinear_dyn} with disturbance feedback~\eqref{eq:affine_contr}--\eqref{eq:nom_dyn} satisfies
\begin{equation}
\begin{aligned}
     x(k) &\in \mathcal{R}_\x( z_k,\psi^\x_k,\P^\x_{(k)}) \defmath \{ z_k + \psi^\x_k\}\bigoplus_{j=0}^{k -1}  \P^{\x}_{k,j} \B^{\nw},\\
     u(k)&\in \mathcal{R}_\u( v_k,\psi^\u_k,\P^\u_{(k)})\defmath \{ v_k + \psi^\u_k\}\bigoplus_{j=0}^{k -1}  \P^{\u}_{k,j} \B^{\nw}.
\end{aligned}
\label{eq:reach}
\end{equation}
\end{proposition}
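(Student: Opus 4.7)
The plan is an induction on $k$ that re-expresses the true closed-loop trajectory as an SLS response driven by a virtual disturbance sequence lying in $\mathcal{B}^{\nw}$.

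First I would introduce the deviation coordinates $\Delta x_k \defmath x(k)-z_k$, $\Delta u_k \defmath u(k)-v_k$ together with the auxiliary errors $\xi_k \defmath \Delta x_k-\psi^\x_k$ and $\eta_k \defmath \Delta u_k-\psi^\u_k$. The initialisation $\psi^\x_0=\Delta x_0$ yields $\xi_0=0$, and the controller \eqref{eq:affine_contr} together with $\psi^\u_0$ at $k=0$ gives $\eta_k=\sum_{j=0}^{k-1}K_{k,j}\xi_j$. Subtracting the nominal dynamics \eqref{eq:nom_nonlinear} and the $\psi$-dynamics \eqref{eq:nom_dyn} from the true update \eqref{eq:nonlinear_dyn} and adding/subtracting the LTV prediction $\widehat{\Delta x}_{k+1}=A(z_k,v_k)\Delta x_k+B(z_k,v_k)\Delta u_k$ gives
\begin{equation*}
\xi_{k+1}=A(z_k,v_k)\xi_k+B(z_k,v_k)\eta_k+\delta_k,
\end{equation*}
where $\delta_k$ collects the mismatch between the true nonlinear evolution (including $E(x,u)w$) and its LTV surrogate. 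Assumption~\ref{assum:sigma}, applied with $\tau=\tau_k$, bounds $|\delta_k|\le\sigma(z_k,v_k,\tau_k)$ componentwise, provided the current deviation satisfies $\|(\Delta x_k,\Delta u_k)\|_\infty\le\tau_k$. I can therefore write $\delta_k=\mathrm{diag}(\sigma(z_k,v_k,\tau_k))\tilde d_k$ with $\tilde d_k\in\mathcal{B}^{\nw}$.

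Next I would carry out the induction. Assuming $\xi_k=\sum_{j=0}^{k-1}\Phi^\x_{k,j}\tilde d_j$ and $\eta_k=\sum_{j=0}^{k-1}\Phi^\u_{k,j}\tilde d_j$ (the $\Phi^\u_{k,j}$ being those implicitly induced by applying the causal gains $K_{k,j}$ to the past $\xi_j$), substituting into the recursion for $\xi_{k+1}$ and using the propagation identity \eqref{eq:tube_propagation} together with $\Phi^\x_{k+1,k}=\mathrm{diag}(\sigma(z_k,v_k,\tau_k))$ yields
\begin{equation*}
\xi_{k+1}=\sum_{j=0}^{k}\Phi^\x_{k+1,j}\tilde d_j,\qquad \tilde d_k\in\mathcal{B}^{\nw}.
\end{equation*}
Adding back $\psi^\x_k$ and $v_k+\psi^\u_k$ gives $x(k)\in\{z_k+\psi^\x_k\}\oplus\bigoplus_{j=0}^{k-1}\Phi^\x_{k,j}\mathcal{B}^{\nw}$ and analogously for $u(k)$, which are precisely the sets $\mathcal{R}_\x$ and $\mathcal{R}_\u$ in \eqref{eq:reach}.

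The main obstacle — and the reason the induction hypothesis must carry the reachable-set representation and the bound $\|(\Delta x_k,\Delta u_k)\|_\infty\le\tau_k$ simultaneously — is ensuring that Assumption~\ref{assum:sigma} is actually applicable at each step. For this I would exploit the overbound $\|[\P_{(k)},\psi_k]\|_\infty\le\tau_k$: once $\xi_k$ and $\eta_k$ have been expressed as $\Phi^\x_{(k)}\tilde d^{0:k-1}$ and $\Phi^\u_{(k)}\tilde d^{0:k-1}$ with $\tilde d_j\in\mathcal{B}^{\nw}$, the triangle inequality on $\|\cdot\|_\infty$ and the definition of $\psi_k=(\psi^\x_k,\psi^\u_k)$ give
\begin{equation*}
\|(\Delta x_k,\Delta u_k)\|_\infty\le\|\psi_k\|_\infty+\|\Phi_{(k)}\|_\infty\le\|[\Phi_{(k)},\psi_k]\|_\infty\le\tau_k,
\end{equation*}
which restores the premise of Assumption~\ref{assum:sigma} for the next step and closes the induction.
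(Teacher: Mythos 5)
Your argument is, in substance, the proof the paper intends: the paper only sketches it by pointing to the reachable set of the LTV error dynamics $\Delta x_{k+1}\in\{A_k\Delta x_k+B_k\Delta u_k\}\oplus\Sigma_k\B^{\nw}$ and citing \cite{leeman2023robust_CDC}, and your induction in the coordinates $\xi_k=\Delta x_k-\psi^\x_k$, $\eta_k=\Delta u_k-\psi^\u_k$ with the virtual disturbance $\tilde d_k$ is exactly the elaboration of that sketch, including the essential point that the overbound $\|[\P_{(k)},\psi_k]\|_\infty\le\tau_k$ is what licenses Assumption~\ref{assum:sigma} at each step and must therefore be carried in the induction hypothesis. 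One intermediate inequality should be fixed: $\|\psi_k\|_\infty+\|\P_{(k)}\|_\infty\le\|[\P_{(k)},\psi_k]\|_\infty$ is false in general (the maximizing rows of the two terms need not coincide, e.g.\ $\psi_k=(1,0)$ with $\P_{(k)}=\sv{0}{1}$ gives $2\not\le 1$); instead bound $\|(\Delta x_k,\Delta u_k)\|_\infty=\bigl\|[\P_{(k)},\psi_k]\sv{\tilde d^{0:k-1}}{1}\bigr\|_\infty\le\|[\P_{(k)},\psi_k]\|_\infty\le\tau_k$ directly from the definition of the matrix infinity norm, which yields the same conclusion and closes the induction as you describe.
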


\begin{proof}
The proof follows by considering the reachable set of the \ac{LTV} error dynamics
\begin{equation}
\begin{aligned}
\Delta x_{k+1} &\in \{A_k \Delta x_k + B_k \Delta u_k\} \oplus \Sigma_k \mathcal{B}^\nw, \\
\Delta x_0 &= x(t) - z_0,
\end{aligned}
\label{eq:error_uncertain_LTV_prop}
\end{equation}
where $\Sigma_k = \mathrm{diag}\left( \sigma(z_k,v_k,\tau_k)\right) $ overapproximates the disturbance of the nonlinear dynamics arising in~\eqref{eq:LTV_uncertain}, see also~\cite[Prop.~3.1.--3.3]{leeman2023robust_CDC} for a similar proof.
\end{proof}

By introducing the controller parametrization \eqref{eq:affine_contr}–\eqref{eq:nom_dyn} and the (over-)approximation of the reachable sets of the nonlinear dynamics~\eqref{eq:reach}, we can ensure robust constraint satisfaction by requiring that the state and input reachable sets $(\mathcal{R}_\x,\mathcal{R}_\u)$ lie within the constraint set $\mathcal{C}$.
Next, we formulate the optimization problem solved at each time step in the MPC loop.

\subsection{Finite-horizon robust nonlinear optimal control}
\label{sec:finite_horizon_robust_nonlinear_optimal_control}
Using the LTV error system response from Prop \ref{prop:slp}, we formulate a tractable robust nonlinear optimal control problem that jointly optimizes the nominal trajectory, affine disturbance-feedback, and overbounding terms to ensure robust constraint satisfaction.

As in standard MPC formulations, additional terminal ingredients are required to ensure recursive feasibility (see Sec.~\ref{sec:robust_mpc_framework}).
Therefore, the finite-horizon problem also uses a terminal set
\begin{equation}
\begin{aligned}
    \mathcal{X}_\f& \defmath  \{x\in \R^{\nx}|~c_{i,\f}\T x + b_{i,\f} \le 0,~ i= 1, \ldots, {n_\f}\},\label{eq:terminal_constraints}
    \end{aligned}
\end{equation}
with the origin in its interior, $0\in \text{int}(\mathcal{X}_\f)$, $c_{i,\f} \in \R^{\nx}$, and $b_{i,\f} \in \R$. %, with zero terminal constraint on the terminal state $z_T = 0$.
Stability and performance will be ensured using a standard quadratic finite-horizon cost
\begin{equation}
\label{eq:JT_cost}
\begin{aligned}
&J_T(\Z,\V,\bm\psi,\bm\Phi)= \sum_{k=0}^{T-1} \ell(z_k+\psi_k^\x,\, v_k+\psi_k^\u) + \ell_\f(z_T+\psi_T^\x) \\
\end{aligned}
\end{equation}
with $\ell(x,u)$ from Eq.~\eqref{cost:setup} and $\ell_\f(x) = x^\top P x$ is a quadratic terminal cost.
The proposed \ac{RMPC} optimization problem is formulated as:
\begin{subequations}
\begin{align}
\min_{\substack{\bm \Phi, \Z, \V,\\ \tube, \bm\psi}}\quad & J_T( \Z,\V, \bm\psi, \bm \Phi), \label{eq:sls_cost}\\
\text{s.t.}\quad 
& z_{k+1} = f(z_k, v_k), ~k =0, \ldots, T-1 \label{eq:nonlinear_sls_nom_nl}\\
&\psi_0^\x= x(t) - z_0,\label{eq:nonlinear_sls_nom_ic}\\
& z_T = 0,\label{eq:nonlinear_sls_terminal_conditions}\\
& \Px_{j+1,j} = \text{diag}(\sigma(z_j, v_j, \tau_j)),\label{eq:nonlinear_sls_Px} \\
&\Px_{k+1,j} = A(z_k,v_k) \Px_{k,j} + B(z_k,v_k)\Pu_{k,j}, \nonumber\\
& \quad j = 0, \ldots, T-1, ~k = j+1, \ldots, T-1, \label{eq:sls_slp}\\
&\psi_{k+1}^\x = A(z_k,v_k) \psi_k^\x + B(z_k,v_k) \psi_k^\u,\nonumber \\
&\quad k=0,\ldots, T-1,\label{eq:sls_LTV_IC}\\
& \sum_{j=0}^{k-1}\| c_i\T \P_{k,j}\|_1 + c_i\T \sv{z_k + \psi^\x_k}{v_k +\psi^\u_k} + b_i \le 0,\nonumber\\
& \quad k = 0,\ldots, T-1,~ i=1,\ldots,n_\textrm{c}\label{eq:cons_nonlinear_SLS}\\
& \sum_{j=0}^{T-1} \| c_{i,\f}\T \Px_{T,j}\|_1 + c_{i,\f}\T \psi_T^\x + b_{i,\f} \le 0,\nonumber\\
&\quad ~i=1,\ldots,n_{\f},\label{eq:terminal_set}\\
&\|[\P_{(k)},\psi_k]\|_\infty\le \tau_k,~k=0,\ldots,T-1.\label{eq:nonlinearity_bounder}
\end{align}
\label{eq:nonlinear_sls}
\end{subequations}

The formulation~\eqref{eq:nonlinear_sls} integrates all components of the robust prediction and control design:
\begin{itemize}
    \item The nominal trajectory is propagated according to the nonlinear model~\eqref{eq:nonlinear_sls_nom_nl}, subject to the initial and terminal conditions~\eqref{eq:nonlinear_sls_nom_ic}–\eqref{eq:nonlinear_sls_terminal_conditions}. In particular,  $x(t)\neq z_0$ in~\eqref{eq:nonlinear_sls_nom_ic} allows for discrepancies between the nominal trajectory the uncertain system, which is crucial to provide a feasible candidate solution. 
Together with the terminal constraint, this is key to establishing recursive feasibility, a feature not present in previous formulations~\cite{leeman2025robust_TAC,leister2024robust}.
\item 
The disturbance propagation is captured through the LTV error dynamics~\eqref{eq:nonlinear_sls_Px}–\eqref{eq:sls_LTV_IC}, where the terms $\Px_{k,j}$ and $\Pu_{k,j}$ describe how model mismatch in $j$ steps affect the state and input in $k\geq j$ steps.
\item Robust constraint satisfaction is enforced by the tightened constraints~\eqref{eq:cons_nonlinear_SLS}–\eqref{eq:terminal_set}, which guarantee that both state and input constraints \eqref{eq:constraints} hold for all disturbances and modeling errors. This robust inclusion is enforced using the support function of the uncertainty sets \eqref{eq:tube_propagation}.
\item Finally, the auxiliary bounds~\eqref{eq:nonlinearity_bounder} ensure that the nonlinear modeling errors remain contained within the radius $\tau_k$, which is itself a decision variable optimized together with the trajectory and disturbance-feedback gains.
\end{itemize}

As per Prop.\ref{prop:slp}, the disturbance-feedback gains resulting from solving \eqref{eq:nonlinear_sls} ensures that the uncertain nonlinear system \eqref{eq:nonlinear_dyn} remains within the constraints $\mathcal{C}$ over the prediction horizon.

\section{Robust MPC Theoretical Analysis}
\label{sec:robust_mpc_framework}

This section shows the main theoretical properties of the proposed \ac{RMPC}: recursive feasibility, robust constraint satisfaction, and \ac{ISS} stability. 

The optimization problem~\eqref{eq:nonlinear_sls} is solved in receding horizon at each time step $t \in\mathbb{N}$ and we apply only the first optimal input 
\begin{equation}
    u(t) = v_0^\star + \psi^{\u\star}_0,
    \label{eq:optimal_input}
\end{equation}
where $v_0^\star, \psi^{\u\star}_0$ are the optimal solution for the current state $x(t)$. 
We then prove recursive feasibility of~\eqref{eq:nonlinear_sls}, i.e., if the problem is feasible at time $t$, it remains feasible at $t{+}1$ when applying the input~\eqref{eq:optimal_input} to the uncertain system~\eqref{eq:nonlinear_dyn}.

\subsection{Recursive feasibility}
\label{sec:recursive_feasibility}
First, to establish recursive feasibility, the terminal set $\mathcal{X}_\f$ needs to be chosen appropriately. 

A terminal controller $u = K_\f x$ is used, such that the linear system $A_\textrm{cl} \defmath A_\f + B_\f K_\f$ is stable, with $    A_\mathrm{f}\defmath A(0, 0),~ B_\mathrm{f}\defmath B(0,0)$.
The following constants bound the model errors in the terminal set:
\begin{equation*}
    \tau_\f \defmath \max_{\Delta x\in \mathcal{X}_\f} \|(\Delta x,  K_\f\Delta x)\|_\infty,\quad {\Sigma}_\mathrm{f} \defmath \text{diag}(\sigma(0,0,\tau_\f)).
\end{equation*}
\begin{assumption}
\label{assum:terminal_set}
   The polytopic terminal set $\mathcal{X}_\mathrm{f}$ is \ac{RPI}, i.e.,
    \begin{equation}
            A_\textrm{cl}\Xf \oplus \Sigma_\f \B^{\nx}\subseteq \mathcal{X}_\f, \label{eq:terminal_set_condition}
    \end{equation}
Furthermore, state and input constraints are satisfied in the terminal set, i.e., $\mathcal{X}_\f \times K_\f \mathcal{X}_\f \subseteq \mathcal{C}$.
\end{assumption}
The terminal set $\mathcal{X}_\f$ can be computed as the maximum \ac{RPI} $\mathcal{X}_\f$ set for the closed-loop dynamics $x_+ = A_\text{cl} x + w$ where $w\in \Sigma_\mathrm{f}\B^\nx $, see \cite{blanchini1999set} for an overview on computing invariant sets satisfying Assumption \ref{assum:terminal_set}.

\begin{theorem}
Given Assumptions~\ref{assum:sigma}, \ref{assum:terminal_set}, and suppose the optimization problem~\eqref{eq:nonlinear_sls}
feasible at $t=0$. Then, Problem~\eqref{eq:nonlinear_sls} remains feasible for all $t \in \mathbb{N}_0$, and the closed-loop system~\eqref{eq:nonlinear_dyn} with input~\eqref{eq:optimal_input} satisfies the constraints \eqref{eq:constraints} for all $t\in \mathbb{N}_0$ and all disturbances $w(t)\in \B^\nw$.
\label{thm:rec_feas}
\end{theorem}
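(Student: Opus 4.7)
The proof is a receding-horizon recursive feasibility argument: from an optimizer $(\Z^\star, \V^\star, \bm\Phi^\star, \bm\psi^\star, \tube^\star)$ at time $t$, I construct a feasible candidate at $t+1$ via shift-and-append. The distinctive feature is that $\bm\psi$ and $\bm\Phi$ are parametrically coupled to the nonlinear nominal trajectory through the linearization \eqref{eq:jac}, so the shift must be combined with a linear correction that absorbs the realized one-step disturbance. By Prop.~\ref{prop:slp}, there exists $d_0 \in \B^\nw$ such that $x(t+1) = z_1^\star + \psi_1^{\x,\star} + \Phi_{1,0}^{\x,\star} d_0$; this $d_0$ drives the correction.

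I shift the nominal $z_k^\mathrm{c} = z_{k+1}^\star$, $v_k^\mathrm{c} = v_{k+1}^\star$ for $k \le T-2$ and close out with the terminal controller: $z_{T-1}^\mathrm{c} = z_T^\star = 0$ by \eqref{eq:nonlinear_sls_terminal_conditions}, so $v_{T-1}^\mathrm{c} = K_\f z_{T-1}^\mathrm{c} = 0$, which gives $z_T^\mathrm{c} = f(0,0) = 0$. Using the linearity of \eqref{eq:sls_slp}--\eqref{eq:sls_LTV_IC}, I set the shifted block $\psi_k^\mathrm{c} = \psi_{k+1}^\star + \Phi_{k+1,0}^\star d_0$, $\Phi_{k,j}^\mathrm{c} = \Phi_{k+1, j+1}^\star$, $\tau_k^\mathrm{c} = \tau_{k+1}^\star$ for $k \le T-2$. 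The terminal block at $k = T-1$ closes with $K_\f$: $\psi_{T-1}^{\u,\mathrm{c}} = K_\f \psi_{T-1}^{\x,\mathrm{c}}$, $\Phi^{\u,\mathrm{c}}_{T-1, j} = K_\f \Phi^{\x,\mathrm{c}}_{T-1, j}$, and $\tau_{T-1}^\mathrm{c} = \tau_\f$, which forces $\Phi^{\x,\mathrm{c}}_{T, T-1} = \Sigma_\f$.

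Verification then splits in two. On the shifted block, the tightened constraints \eqref{eq:cons_nonlinear_SLS} and the overbound \eqref{eq:nonlinearity_bounder} follow by bounding the $d_0$-correction via $|c_i^\top \Phi_{k+1,0}^\star d_0| \le \|c_i^\top \Phi_{k+1,0}^\star\|_1$, which merges the shifted sum and the $d_0$-term back into the original constraint at index $k+1$ of the problem solved at time $t$; the same row-wise bound gives $\|[\Phi_{(k)}^\mathrm{c}, \psi_k^\mathrm{c}]\|_\infty \le \|[\Phi_{(k+1)}^\star, \psi_{k+1}^\star]\|_\infty \le \tau_{k+1}^\star$. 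The terminal block reduces to the RPI property \eqref{eq:terminal_set_condition}: the candidate reach set at $k = T-1$ is contained in the original reach set at $k = T$, hence in $\mathcal{X}_\f$, and one further closed-loop step lands in $\Acl \mathcal{X}_\f \oplus \Sigma_\f \B^\nx \subseteq \mathcal{X}_\f$, which yields \eqref{eq:terminal_set}; the overbound at $k = T-1$ is admissible by definition of $\tau_\f$, and the stage constraint \eqref{eq:cons_nonlinear_SLS} at $k = T-1$ follows from Assumption~\ref{assum:terminal_set}. Closed-loop constraint satisfaction is immediate from \eqref{eq:cons_nonlinear_SLS} at $k = 0$ of the problem solved at time $t$, which collapses to $(x(t), u(t)) \in \mathcal{C}$.

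The main obstacle is that $\bm\psi^\mathrm{c}$ depends affinely on the adversarial realization $d_0$, so without the linearity of \eqref{eq:sls_slp}--\eqref{eq:sls_LTV_IC} the correction would not propagate cleanly and the overbound \eqref{eq:nonlinearity_bounder} would be difficult to preserve. The enabling observation is that the $d_0$-term is exactly absorbed by the column $\Phi_{k+1,0}^\star$ already present in the original solution, so every shifted constraint at $t+1$ is dominated by the corresponding constraint at time $t$; combined with the RPI closure at the terminal step, this drives both recursive feasibility and robust constraint satisfaction.
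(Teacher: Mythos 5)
Your proposal is correct and follows essentially the same route as the paper's proof: shift the nominal trajectory and append the terminal controller, absorb the realized one-step mismatch through an equivalent disturbance $\bar w\in\B^{\nw}$ acting on the first column $\Phi^{\star}_{k+1,0}$, and verify that every shifted constraint is dominated by the corresponding time-$t$ constraint, with the terminal step closed by the RPI condition \eqref{eq:terminal_set_condition}. Your indexing $\psi_k^{\mathrm{c}}=\psi_{k+1}^{\star}+\Phi_{k+1,0}^{\star}d_0$ is in fact the consistent version of the paper's Eq.~\eqref{eq:feas_psi}, so no substantive difference remains.
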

\begin{proof}
%The proof leverages the fact that we can use a shifted nominal trajectory $z$ as a feasible candidate solution through the introduced auxiliary variables $\psi$ in~\eqref{eq:nonlinear_sls_nom_ic}. Thus, the Jacobians~\eqref{eq:sls_slp} remain unchanged for the candidate solution. Thus, analogous to  existing \ac{RMPC} for linear systems~\cite{GOULART2006523,sieber2025computationally}, we can ensure recursive feasibility based on Prop.~\ref{prop:slp}, see Appendix~\ref{sec:proof_rec_feas} for details.
Since the Jacobians in \eqref{eq:sls_slp} vary depending on the nonlinear nominal trajectory \eqref{eq:nonlinear_sls_nom_nl}, linear results \cite{sieber2025computationally,GOULART2006523} do not apply. 
Here, this issue is addressed by introducing the auxiliary trajectory $(\psi^\x,\psi^\u)$, which account for the mismatch between the nonlinear dynamics and their LTV approximation.
The detailed proof is provided in Appendix~\ref{sec:proof_rec_feas}.
\end{proof}

\subsection{Stability}
\label{sec:stab}
In the following, we analyze the stability of the closed-loop system~\eqref{eq:nonlinear_dyn} with \eqref{eq:optimal_input} under a cost function~$J_T$. We show that, with an appropriate choice of stage and terminal costs, the formulation ensures \ac{ISS} stability \cite{limon2009input}.

\begin{assumption}
\label{assum:lyap_decrease_terminal}
The terminal cost $\ell_\f(x) = x^\top P x$ in \eqref{eq:JT_cost} satisfies
\begin{equation}
A_{\mathrm{cl}}^\top P A_{\mathrm{cl}} - P + Q + K_\f^\top R K_\f \;\preceq\; 0.
\label{eq:ISS_lyap_dec}
\end{equation}
\end{assumption}

To analyze stability, we consider the optimal cost of Problem~\eqref{eq:nonlinear_sls} at time $t$
\begin{equation}
    V_t = \sum_{k=0}^{T-1} \ell(z_k^\star+\psi_k^{\x\star}, v_k^\star + \psi_k^{\u\star}) + \ell_\f(\psi^{\x\star}_T).
\end{equation}
\begin{theorem}
\label{thm:iss}
Suppose the conditions in Theorem~\ref{thm:rec_feas} and Assumption~\ref{assum:lyap_decrease_terminal} hold. 
Then $V_t$ is an ISS Lyapunov function and the closed-loop system is \ac{ISS} stable.
\end{theorem}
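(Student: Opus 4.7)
The plan is to show that $V_t$ satisfies the three standard ISS-Lyapunov inequalities -- a quadratic lower bound, a quadratic upper bound in a neighborhood of the origin, and a decrease relation of the form $V_{t+1}-V_t\le -\alpha_3(\|x(t)\|)+\sigma(\|w(t)\|)$ -- and then invoke the ISS result of Limón et al. The lower bound is the easiest step: since~\eqref{eq:nonlinear_sls_nom_ic} forces $z_0^\star+\psi_0^{\x\star}=x(t)$, the first term in $V_t$ equals $\ell(x(t),u(t))\ge \lambda_{\min}(Q)\|x(t)\|^2$, yielding $\alpha_1$. For the upper bound near the origin I will exploit Assumption~\ref{assum:terminal_set}: when $x(t)\in\mathcal{X}_\f$, I pick the candidate $z_k=0,\,v_k=0$ for all $k$, $\psi_0^\x=x(t)$, $\psi_k^\u=K_\f\psi_k^\x$, and $\Phi^\star$ built from the closed-loop matrix $A_\mathrm{cl}$ and $\Sigma_\f$; a telescoping argument based on \eqref{eq:ISS_lyap_dec} shows its cost is bounded by $x(t)^\top P x(t)$ plus a constant term stemming from the disturbance propagation through $\Phi$, which provides $\alpha_2$ on a neighborhood.

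The core step is the decrease. I will reuse the candidate solution from the recursive-feasibility proof of Theorem~\ref{thm:rec_feas}: the nominal trajectory and the disturbance-feedback responses are shifted by one step, the terminal slot is closed using the terminal controller $K_\f$ around the origin, and the auxiliary variables absorb the realized first-step mismatch. Concretely, since Assumption~\ref{assum:sigma} guarantees $x(t+1)=z_1^\star+\psi_1^{\x\star}+\Phi_{1,0}^{\x\star} d_0$ for some $d_0\in\mathcal{B}^\nw$ with $\|d_0\|_\infty$ controlled by $\|w(t)\|$ (the pure linearization part being absorbed regardless of $w$), I will choose $\tilde{\psi}_k^\x=\psi_{k+1}^{\x\star}+\Phi_{k+1,0}^{\x\star}d_0$ and $\tilde{\psi}_k^\u=\psi_{k+1}^{\u\star}+\Phi_{k+1,0}^{\u\star}d_0$. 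Expanding the quadratic stage costs $\ell$ along this candidate separates each term into the shifted nominal contribution plus a cross term linear in $d_0$ and a quadratic term in $d_0$.

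Summing these contributions, the linear and quadratic $d_0$-terms form the disturbance remainder $\sigma(\|w(t)\|)$; this is where boundedness of $\Phi^\star$ (guaranteed by constraint~\eqref{eq:nonlinearity_bounder} together with compactness of $\mathcal C$ and $\mathcal X_\f$) is used to obtain a $\mathcal K_\infty$ bound in $\|w(t)\|$. The remaining deterministic part telescopes to $V_t-\ell(x(t),u(t))$ plus the terminal mismatch $\ell(0+\tilde{\psi}_{T-1}^\x,K_\f\tilde{\psi}_{T-1}^\x)+\ell_\f(A_\mathrm{cl}\tilde{\psi}_{T-1}^\x)-\ell_\f(\psi_T^{\x\star})$; Assumption~\ref{assum:lyap_decrease_terminal} applied to $\tilde{\psi}_{T-1}^\x$ shows this difference is non-positive. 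Since $V_{t+1}$ is upper bounded by the candidate cost, we obtain $V_{t+1}-V_t\le -\ell(x(t),u(t))+\sigma(\|w(t)\|)\le -\lambda_{\min}(Q)\|x(t)\|^2+\sigma(\|w(t)\|)$, which completes the ISS-Lyapunov certificate. Combined with the sandwich bounds, ISS of the closed loop follows from~\cite{limon2009input}.

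The main obstacle I anticipate is the terminal handling: unlike standard MPC, the terminal state of the candidate is $\tilde{z}_T+\tilde{\psi}_T^\x=A_\mathrm{cl}\tilde{\psi}_{T-1}^\x$ plus a $d_0$-perturbation, and the nonlinear terminal equality $z_T=0$ has to be preserved by the shift; invoking $f(0,0)=0$ together with Assumption~\ref{assum:terminal_set} ensures the polytopic terminal constraint~\eqref{eq:terminal_set} is met for the candidate. Second, care is required to separate the part of the one-step mismatch $\delta$ that depends on $w(t)$ from the pure linearization part, so that $\sigma(\|w(t)\|)$ is genuinely $\mathcal K$ and vanishes as $w\to 0$; this hinges on the additive structure of $E(x,u)w$ in~\eqref{eq:nonlinear_dyn} and on $\sigma$ decomposing as in Remark~\ref{rem:linearization_error}.
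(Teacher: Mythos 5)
Your plan follows essentially the same route as the paper's proof: a lower bound from the first stage cost via the initial condition \eqref{eq:nonlinear_sls_nom_ic}, a local upper bound $V_t\le x(t)^\top P x(t)$ on $\mathcal{X}_\f$ extended to the feasible set via \cite[Prop.~2.16]{rawlings2017model}, and a decrease obtained by evaluating the cost along the shifted candidate from the proof of Theorem~\ref{thm:rec_feas}, using Assumption~\ref{assum:lyap_decrease_terminal} for the terminal slot and Lipschitz continuity to collect the disturbance-dependent remainder.

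Two details in your plan need correction. First, in the upper-bound step there is no ``constant term stemming from the disturbance propagation through $\Phi$'': the cost \eqref{eq:JT_cost} depends only on $(\Z,\V,\bm\psi)$ and not on $\bm\Phi$, so the terminal-controller candidate telescopes exactly to $V_t\le x(t)^\top P x(t)$. This matters beyond tidiness --- an additive constant would make $V$ strictly positive at the origin and destroy the $\alpha_2$ bound, so you should verify (rather than concede) that no such term arises. Second, the assertion that $\|d_0\|_\infty$ is ``controlled by $\|w(t)\|$'' is not literally true: the equivalent disturbance defined through $\Phi^{\x\star}_{1,0}\bar w = x(t+1)-z_1^\star-\psi_1^{\x\star}$ carries both the additive term $E(x,u)w(t)$ and the first-step linearization error, and the latter does not vanish as $w(t)\to 0$. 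To obtain a genuine $\mathcal{K}$-class remainder $\sigma(\|w(t)\|)$ one must split the perturbation $\Phi^{\star}_{k+1,0}\bar w$ into a part proportional to $\|w(t)\|$ and a part (second order in $\tau_0^\star$, hence in $\|\psi_0^\star\|$) that is absorbed into the negative-definite term $-x(t)^\top Q x(t)$. You correctly flag this as the delicate point in your closing paragraph, but your plan does not yet close it; the paper's own proof also treats this step only at the level of a one-line Lipschitz argument, so on this point your sketch and the paper's are at the same level of rigor.
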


\begin{proof}
Considering the candidate solution from the proof of Theorem~\ref{thm:rec_feas}, the terminal cost condition (Asm.~\ref{assum:lyap_decrease_terminal}), and Lipschitz continuity of the cost $\ell,\ell_\f$ and dynamics $f$ on the compact set $\mathcal{C}$ ensures 
\begin{align*}
V_{t+1}-V_t \le -x(t)^\top Q x(t) + c_\ell\,\|w(t)\|,
\end{align*}
for some Lipschitz constant $c_\ell$. Moreover, the value function satisfies the following lower and upper bounds:
\begin{align*}
&V_t \geq \ell(z_0^\star+\psi_0^{\x,\star},\, v_0^\star+\psi_0^{\u,\star})
   \stackrel{\eqref{cost:setup},\eqref{eq:nonlinear_sls_nom_ic}}{\geq} x(t)^\top Q x(t), \\
&V_t \leq x(t)^\top P x(t), \quad x(t)\in\mathcal{X}_\f,~0\in\mathrm{int}(\mathcal{X}_\f).
\end{align*}
The local upper bound also implies an upper bound on the feasible set~\cite[Prop. 2.16]{rawlings2017model}. 
Thus, $V_t$ satisfies the three standard conditions of an ISS Lyapunov function and $V_t$ is an ISS Lyapunov function, and therefore the closed-loop system is ISS stable~\cite{limon2009input}.
\end{proof}

\section{Implementation via \ac{SCP}}
To solve the nonconvex optimization problem~\eqref{eq:nonlinear_sls}, we propose an efficient implementation that combines the flexibility of sequential convex programming (\ac{SCP})\cite{malyuta2022convex} with recent advances in numerical optimization for \ac{MPC} with disturbance-feedback\cite{leeman2024fast_NMPC}. At each \ac{SCP} iteration, a (convex) \ac{SOCP} is constructed by linearizing the nonlinear dynamics and convexifying the constraint tightening around the current nominal trajectory $(\Z,\V)$. The resulting \ac{SOCP} can be efficiently solved using the structure-exploiting solver of~\cite{leeman2024fast_NMPC}, and its solution is then used to update the trajectory. The cost further includes a quadratic regularization term on $\P$ to improve numerical stability of the optimization~\cite{leeman2024fast_NMPC}. The overall procedure is summarized in Algorithm~1.

Crucially, each \ac{SOCP} retains the numerical structure exploited by the fast-SLS solver~\cite{leeman2024fast_NMPC}, which is specifically designed for efficient \ac{MPC} with jointly optimized disturbance feedbacks. The solver alternates between Riccati recursion and \ac{QP} solve for the nominal dynamics.

\begin{tcolorbox}[title=Algorithm 1: Proposed Robust \ac{MPC}, colback=gray!5, colframe=black!30, before skip=6pt, after skip=6pt]
	\textbf{Inputs:}  horizon $T$, cost weights, constraints $\mathcal{C}$, $\mathcal{X}_\f$.\\
 \textbf{For $t = 0,1,2,\ldots$ (receding horizon):}
    \begin{enumerate}
        \item Measure current state $x(t)$.
    \item Initialize $(\Z,\V)$: shifted previous solution.
    \item Repeat until convergence \textbf{(SCP loop)}: 
    \begin{enumerate}
        \item Linearize~\eqref{eq:nonlinear_sls} around $(\Z,\V)$ to build an \ac{SOCP} approximation.
        \item Solve the \ac{SOCP} via~\cite{leeman2024fast_NMPC}, obtaining $(\Z^{\text{lin}\star},\V^{\text{lin}\star})$ and $\bm{\Phi}^\x$, $\bm{\Phi}^\u$, $\bm{\psi}^\x$, $\bm{\psi}^\u$.
        \item Update $(\Z,\V)\leftarrow (\Z,\V) + (\Z^{\text{lin}\star},\V^{\text{lin}\star})$. 
    \end{enumerate}
    \item Apply in closed loop: $u(t) = v^{\star}_0 + \psi^{\u\star}_0$.
\end{enumerate}
\end{tcolorbox}

\begin{remark}[Computational Efficiency]  
    To enhance computational efficiency, we adopt the \ac{RTI} scheme \cite{gros2020linear}: Instead of iterating the sequential convex program or the associated \ac{SOCP} solvers\cite{leeman2024fast_NMPC} to full convergence at every sampling step $t$, \ac{RTI} performs only one (or a few) iterations.
\end{remark}

\begin{remark}[Computational Complexity]  
The update of the nominal nonlinear trajectory, including Jacobian evaluation and the \ac{QP} solve, has a computational complexity of $\mathcal{O}(T(n_\mathrm{x}^3 + n_\mathrm{u}^3))$, matching that of state-of-the-art \ac{MPC} solvers such as~\cite{verschueren2022acados} and \ac{iLQR}~\cite{giftthaler2018family,neunert2016fast}. The Riccati-based disturbance-feedback update~\cite{leeman2024fast_NMPC} scales as $\mathcal{O}(T^2(n_\mathrm{x}^3 + n_\mathrm{u}^3))$ and constitutes the only additional computational cost compared to a nominal \ac{MPC}.
This represents a significant improvement over a naive implementation with complexity $\mathcal{O}(T^4 (n_\mathrm{x}^3 + n_\mathrm{u}^3)n_\mathrm{x}^3)$. 
\end{remark}

\section{Numerical Examples}

First, we consider a simple example of a cart-pole, where we illustrate the application of the proposed theory (Sec.~\ref{sec:inverted_pendulum}). 
Then, we study the more practical application to rocket landing, where we introduce approximations that enable rapid deployment without system-specific offline designs (Sec.~\ref{sec:rocket}). We evaluate our approach against soft-constrained MPC baselines\cite{chiu2022collision} (Sec. \ref{sec:comparison}).
Finally, we report computation times, where we additionally study consider a quadcopter~\cite{zhao2022tube} as well as on a space-grade CPU (Sec.~\ref{sec:computation_times}).
\ifthenelse{\boolean{anonymize}}%
{\footnote{All parameters are specified in the open-source implementation:\\https://anonymous.4open.science/r/robust-nonlinear-mpc/}}%
{\footnote{All parameters are specified in the open-source implementation:\\\url{https://github.com/antoineleeman/robust-nonlinear-mpc}}}
\subsection{Cart-pole System}
\label{sec:inverted_pendulum}
\begin{figure}[ht!]
    \centering
    \includegraphics[width=\linewidth]{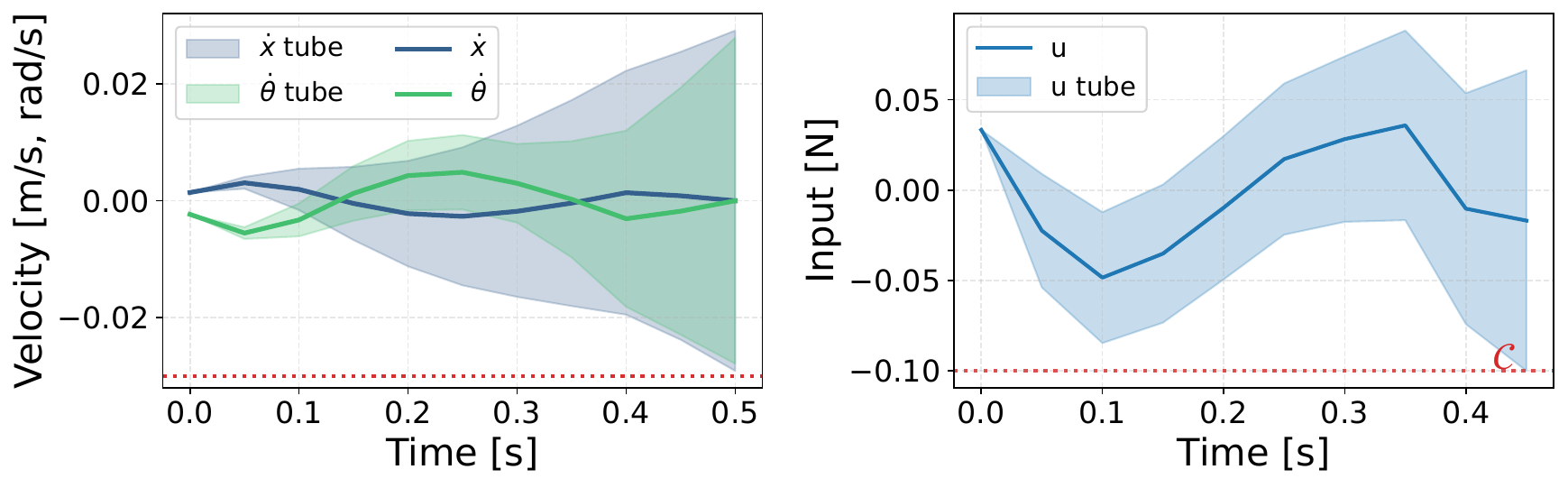}
\caption{Cart-pole implementation of the proposed \ac{RMPC} formulation \eqref{eq:nonlinear_sls}. The shaded regions show the disturbance reachable sets and the lines indicate predicted nominal states and input.}
    \label{fig:pendulum_plot}
\end{figure}
We first consider a cart-pole system, where we implement the proposed optimization problem \eqref{eq:nonlinear_sls}.
The constant $\mu$ from Remark \ref{rem:linearization_error} is obtained by sampling within the constraint set $\mathcal{C}$. The terminal set $\mathcal{X}_\f$ is designed via  \cite[Thm. 3.1]{kouvaritakis2016model}.

Fig.~\ref{fig:pendulum_plot} shows that the optimized reachable sets \eqref{eq:reach} with horizon $T=10$ lie within the constraints $\mathcal{C}$, demonstrating that the proposed robust MPC framework successfully enforces state and input constraints in the presence of disturbances. 
This example is implemented with a generic NLP solver (\texttt{sqpmethod} from CasADi~\cite{Andersson2019}). While it serves as a theoretical proof of concept, the solver gets extremely slow due to the large number of decision variables in the matrix~$\Phi$. Thus, for the following examples we implement the specialized solver (Alg.~1) that exploits the problem structure and scales efficiently to high-dimensional dynamics.

\subsection{Rocket landing with steerable thrust}
\label{sec:rocket}
\begin{figure*}[ht!]
\centering    \includegraphics[width=\linewidth]{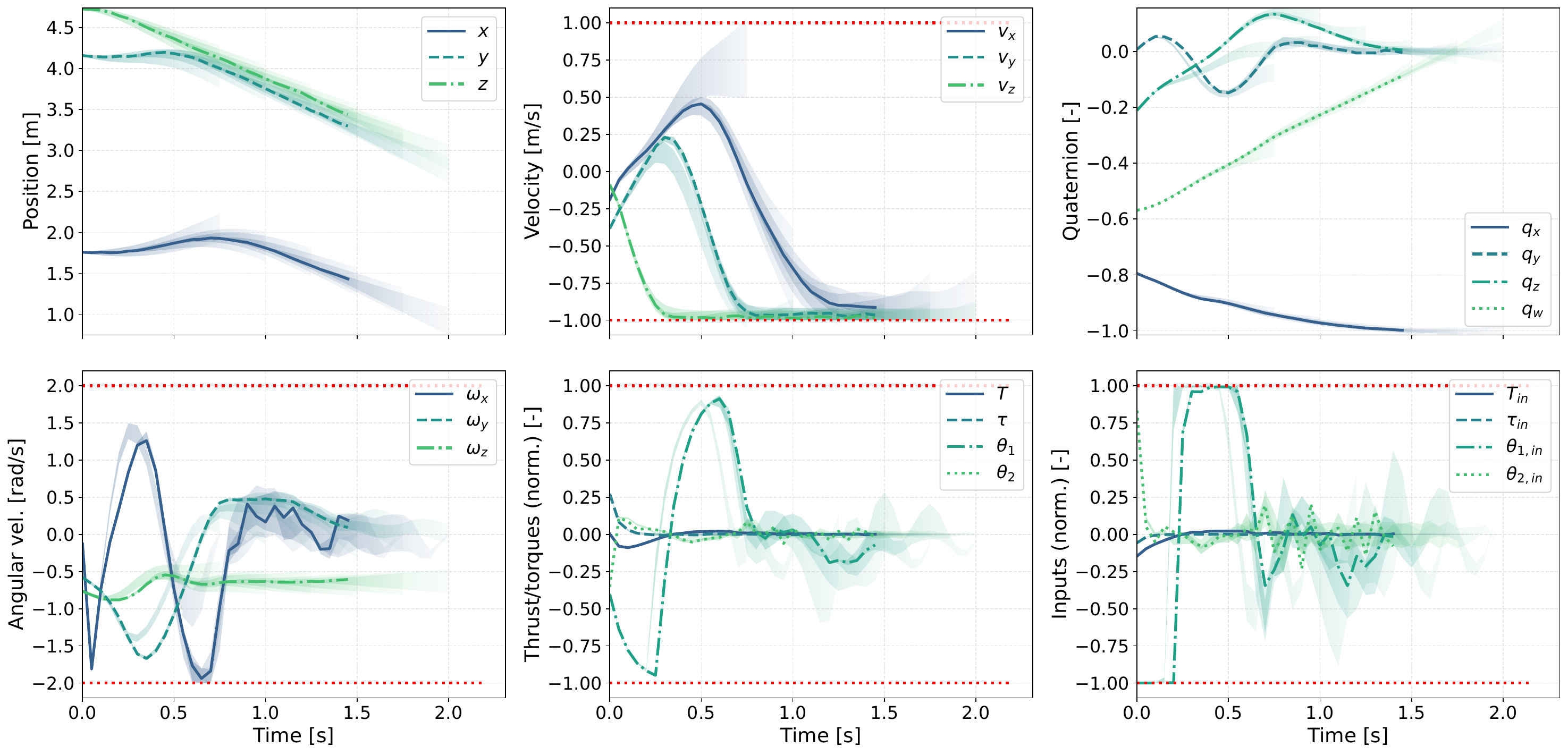}
    \caption{Closed-loop state and input trajectories for rocket landing with steerable thrust. The shaded regions show the disturbance reachable sets $\mathcal{R}^{\mathrm{x}},\mathcal{R}^{\mathrm{u}}$~\eqref{eq:reach} at selected time steps predicted over the horizon. The red dotted lines indicate state and input constraints.}
    \label{fig:Thrust_vectoring_landing}
\end{figure*}
To enable rapid deployment, we adopt a simplified implementation by setting $\mu =0$ (disregarding Hessian in \eqref{eq:rem_def}), omitting the terminal constraints~\eqref{eq:nonlinear_sls_terminal_conditions},\eqref{eq:terminal_set}, and fixing $\bm \psi = 0$. 
This approximation significantly reduces implementation complexity and allows for direct application without system-specific offline design, while relaxing some of the theoretical guarantees.

We adopt the rocket model from \cite{spannagl2021design}, which captures the translational and rotational dynamics
and an actuator with four internal states, yielding 17 states and 4 inputs. The control objective is to steer the rocket from arbitrary initial conditions to a vertical landing while minimizing a quadratic control objective, and robustly satisfying all safety constraints, on each state and input.
We use a sampling time of $50~[\mathrm{ms}]$, a prediction horizon of {$T=15$}, and a single \ac{RTI} iteration for both the SCP loop (Alg. 1) and the underlying \ac{SOCP} solver~\cite{leeman2024fast_NMPC}.

Each RTI step only takes about $20~[\mathrm{ms}]$ (real-time feasible).
The proposed \ac{RMPC} scheme successfully steers the rocket to a vertical landing while satisfying all state and input constraints, despite the presence of uncertainties.

\subsection{Comparison}
\label{sec:comparison}
We compare the proposed robust \ac{MPC} scheme against a soft-constraint \ac{MPC} baseline in Fig.~\ref{fig:compare_vel_inputs}. Both use identical dynamics, constraints, and cost functions. The soft-constraint formulation relaxes the constraints using slack variables penalized in the objective, while neither optimizing the $(\Phi^\x,\Phi^\u)$ nor guaranteeing robust constraint satisfaction. 
This ensures feasibility of the optimization problem at every time step, but may cause constraint violations when strict feasibility is not possible.
As opposed to our proposed approach, this soft-constraint approach results in significant violations of safety-critical constraints and incurs large oscillations.
An \ac{iLQR} implementation or its variations\cite{giftthaler2018family} would yield similar violations of the state constraints.
These results highlight the advantages of the proposed method in guaranteeing safety relying on heuristic relaxations and tuning. Moreover, the closed-loop behavior demonstrates minimal conservatism, as the system operates close to the constraints while maintaining robust feasibility.
\begin{figure}[ht!]
    \centering
    \includegraphics[width=\linewidth]{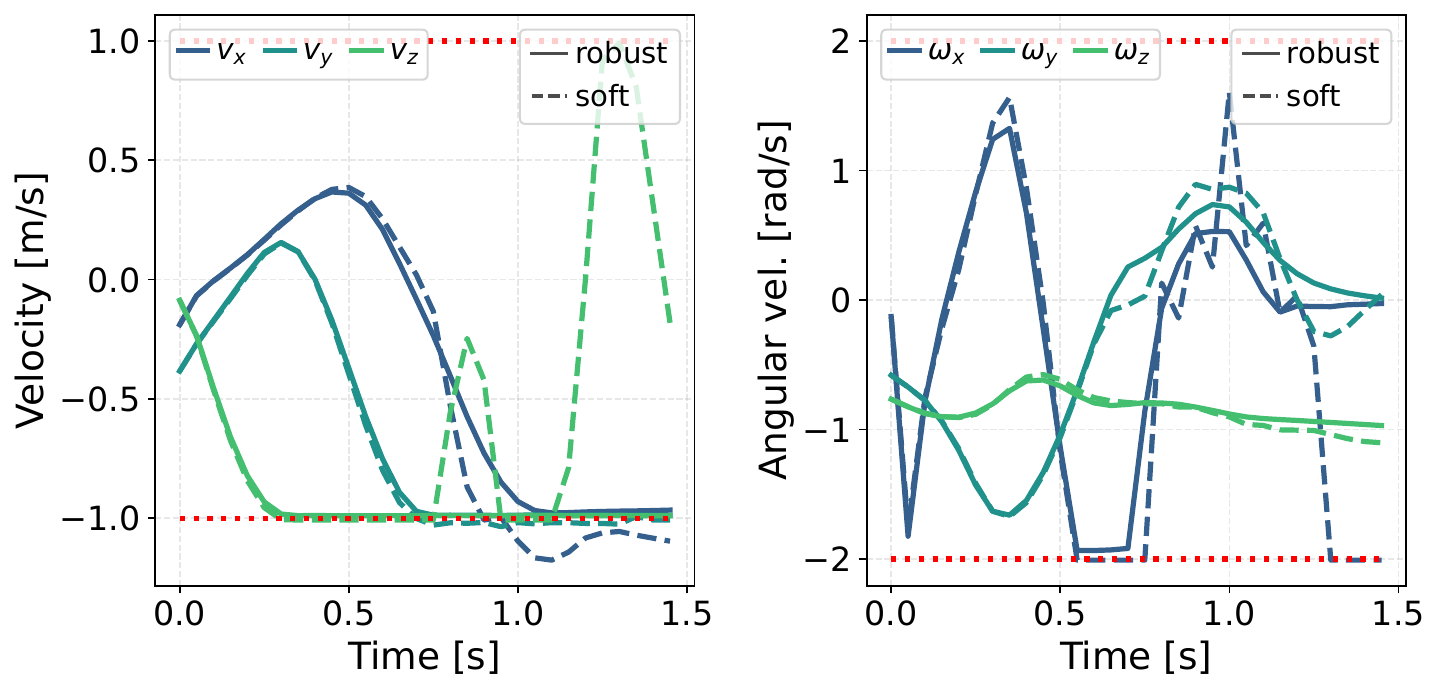}
\caption{Closed-loop velocity $v$ and angular velocity $\omega$ comparison between robust (ours) and soft-constraint \ac{MPC}\cite{chiu2022collision} for rocket landing with steerable thrust.}
    \label{fig:compare_vel_inputs}
\end{figure}

\subsection{Computation times}
\label{sec:computation_times}
Beyond the rocket, we implement a standard quadcopter~\cite{zhao2022tube} and a cart-pole example. 
We use the fast-SLS \ac{SOCP} solver~\cite{leeman2024fast_NMPC} 
which relies on a \ac{QP} solver that we generate with OSQP C-code~\cite{banjac2017embedded}.
We have implemented the backward Riccati recursions using Numba \ac{JIT} compiler \cite{lam2015numba} in parallel. Finally, we have evaluated the Jacobians of the dynamics using CasADi \cite{Andersson2019} with \ac{JIT}.The primary computational steps of Algorithm~1 are solving the \ac{QP}, performing the Riccati recursions (Ricc.), and evaluating the Jacobians (Jac.). Table~\ref{tab:times-vs-horizon} summarizes the per-iteration computation times across various horizon lengths and dynamics measured on a MacBook Pro (Apple M1, 8-core CPU, 16\,GB RAM, 2020). 
With a maximum runtime of $2.2~\textrm{[ms]}$ for the largest problem, Riccati recursions are computationally negligible compared to the dominant \ac{QP} step. The measured computation times are competitive with state-of-the-art (nominal) MPC solvers~\cite{9561438}, and are compatible with real-time deployment.

To demonstrate feasibility under highly limited computational resources, we also implemented the \ac{QP} solve with the LEON4 architecture provided by \ac{ESA}, a space-grade CPU optimized for fault tolerance and radiation hardness. 
Despite being significantly slower than desktop processors, 
these results indicate that the core optimization step can be executed onboard, showing that the overall algorithm is suitable for space \textit{guidance}, especially in combination with a low-level controller.

Notably, even though we solve a robust problem, the only additional cost over a standard nominal NMPC implementation using \ac{SCP}~\cite{verschueren2022acados} is the Riccati column updates required by the disturbance-feedback formulation, which represent only a small fraction of the total solve time in our examples. For comparison, solving the cart-pole problem with $N=15$ to convergence using CasADi IPOPT\cite{Andersson2019} required on average $5435.5\texttt{[ms]}$, i.e., more than $10^3 \times$ slower than our \ac{SCP} implementation (Alg.~1).

% === Summary: computation times (ms) by system and horizon ===
%     System    N    Jac(ms)     QP(ms)  Riccati(ms)
% --------------------------------------------------
%   pendulum   15        0.5        0.2          0.4
%  quadrotor   15        0.8        2.8          0.5
%    rockETH   15        1.2       17.5          1.0

% === Summary: computation times (ms) by system and horizon ===
%     System    N    Jac(ms)     QP(ms)  Riccati(ms)
% --------------------------------------------------
%   pendulum   30        1.0        0.4          0.5
%  quadrotor   30        2.2        5.3          1.0
%    rockETH   30        2.1       56.4          1.3

%    === Summary: computation times (ms) by system and horizon ===
%     System    N    Jac(ms)     QP(ms)  Riccati(ms)
% --------------------------------------------------
%   pendulum   40        1.3        0.5          0.6
%  quadrotor   40        2.1        4.8          1.7
%    rockETH   40        2.7      128.9          2.2

\begin{table}[ht!]
  \centering
  \caption{Average computation times [ms] vs. horizon length $N$.}
  \label{tab:times-vs-horizon}
  \sisetup{detect-weight,detect-inline-weight=math,table-number-alignment=right}
  \renewcommand{\arraystretch}{0.95}
  \begin{tabular}{
    l
    S[table-format=3.0]
    S[table-format=3.1]
    S[table-format=3.1]
    S[table-format=3.1]
  }
    \toprule
    {Dynamics $(n_x,n_u)$} & {$N$} & {Jac. [ms]} & {Ricc. [ms]} & {QP [ms]} \\
    \midrule
    \multirow{4}{*}{Cart-pole (4,1)}
      & 15 & {0.5} & {0.4} & {  0.2} \\
      & 30 & {1.0} & {0.5} & {  0.4} \\
      & 40 & {1.3} & {0.5} & {  0.6} \\
    \midrule
    \multirow{4}{*}{Quadcopter\cite{zhao2022tube} (12,4)}
      & 15 & {0.8} & {0.5} & {  2.8} \\
      & 30 & {2.2} & {1.0} & {  5.3} \\
      & 40 & {2.1} & {1.7} & {  4.8} \\
    \midrule
\multirow{4}{*}{Rocket landing \cite{spannagl2021design} (17,4)}
  & 15 & {1.2} & {1.0} & { 17.5} \\
  & 30 & {2.1} & {1.3} & { 56.4} \\
  & 40 & {2.7} & {2.2} & {128.9} \\
\cmidrule(lr){2-5}
   LEON4 (space-grade CPU): & 10 & n/a & n/a & 2787\, \\
    \bottomrule
  \end{tabular}
\end{table}

\section{Conclusion}
We introduced a scalable robust \ac{MPC} formulation for nonlinear systems that ensures robust constraint satisfaction, recursive feasibility, and input-to-state stability. The formulation jointly optimizes disturbance-feedback gains, a nominal trajectory, and model-error overbounds.
By integrating a disturbance-feedback MPC solver within a sequential convex programming framework, we achieve efficient joint optimization.
Overall, by combining theoretical guarantees with an efficient solver implementation, we provide a scalable framework for robust-by-design \ac{MPC} that requires no problem-specific tailoring and enables real-time deployment on agile robotic platforms. Its robust performance is further validated on a challenging rocket-landing problem with steerable thrust.

\ifthenelse{\boolean{anonymize}}{
}{
\section*{Acknowledgment}
The authors would like to thank Valentin Preda from \ac{ESA} for his valuable help with the LEON4 implementation.
}

\bibliographystyle{IEEEtran}
\bibliography{IEEEabrv,references}
\newpage
\appendix

\subsection{Proof of Theorem~\ref{thm:rec_feas}}
\label{sec:proof_rec_feas}
The proof proceeds in three steps.
First, we introduce a feasible candidate sequence and verify that it satisfies the disturbance propagation~\eqref{eq:sls_slp} together with the nominal nonlinear dynamics~\eqref{eq:nonlinear_sls_nom_nl}.
Second, we construct a shifted version of $\psi$ ensuring that the initial condition~\eqref{eq:nonlinear_sls_nom_ic} holds, while preserving the terminal set condition~\eqref{eq:terminal_set}.
Finally, we show that the resulting sequence satisfies the state and input constraints~\eqref{eq:cons_nonlinear_SLS} as well as the linearization error bound~\eqref{eq:nonlinearity_bounder}.

\textbf{1)} We denote ${\P}^{\x\star}$, ${\P}^{\u\star}$, $\bm \sigma^\star$, ${\Z}^\star$, $\V^\star$ the optimal value of the optimization problem~\eqref{eq:nonlinear_sls} at time step $t$ with initial condition $x(t)$. Then, the following shifted trajectory 
    \begin{equation}
\begin{aligned}
   \bar \Z &= (\bar z_0,\ldots,\bar z_T) = (z_1^\star,\ldots,z_T^\star,0), \\
   \bar \V &= (\bar v_0,\ldots,\bar v_{T-1}) = (v_1^\star,\ldots,v_{T-1}^\star,0).
\end{aligned}
\label{eq:feas_nom}
\end{equation}
and block-shifted matrices
\begin{equation}
    \label{eq:feas_phi_xw}
\begin{aligned}
            &\bar{\P}^\x_{k, j} = {\P}^{\x\star}_{k+1, j+1},\\
            &\quad j = 0, \ldots, T-1,~ k = j+1, \ldots, T-1,\\
            &\bar{\P}^\x_{T, j} = A_\textrm{cl}{\P}^{\x\star}_{T, j +1},~ j = 0, \ldots, T-2,\\
            &\bar{\P}^\x_{T, T-1} = \Sigma_\f.
\end{aligned}
    \end{equation}

    \begin{equation}
        \begin{aligned}
            &\bar{\P}^\u_{k, j} = {\P}^{\u\star}_{k+1, j+1},\\
            &\quad  j = 0, \ldots, T-1,~ k = j+1, \ldots, T-1,\\
            &\bar{\P}^\u_{T-1, j} = K_\f {\P}^{\x\star}_{T-1, j +1},~ j = 0, \ldots, T-2,
            \end{aligned}
    \end{equation}
satisfy the constraints~\eqref{eq:sls_slp},~\eqref{eq:nonlinear_sls_nom_nl}, and~\eqref{eq:nonlinear_sls_terminal_conditions} with the shifted uncertainty bounds
\begin{equation}
    \bar\tau_k \coloneqq \tau_{k+1}^\star,\quad k=0,\ldots,T-1,\qquad 
    \bar\tau_T \coloneqq \tau_\f.
    \label{eq:feas_tau}
\end{equation}

Indeed, as the nominal trajectory $\bar \Z$ and $\bar \V$ is the shifted version of an optimal sequence, it still satisfies~\eqref{eq:nonlinear_sls_nom_nl}, and~\eqref{eq:nonlinear_sls_terminal_conditions}, considering also $f(0,0)=0$.
    Thus, the shifted $\bar \Phi$ also satisfy the equality
\begin{equation}
    \begin{aligned}
        \bar{\Phi}^\x_{j+1,j} &= \textrm{diag}(\sigma(\bar z_j,\bar v_j,\bar \tau_j)),\\
        \bar{\Phi}^\x_{k+1,j} &=  A(\bar z_k,\bar v_k) \bar{\Phi}^\x_{k,j} + B(\bar z_k,\bar v_k)\bar{\Phi}^\u_{k,j},\\
        \end{aligned}
\label{eq:slp_single_matrix}
\end{equation}
$j = 0, \ldots, T-1, k = j+1, \ldots, T-1$, i.e., the constraint~\eqref{eq:sls_slp} holds.   
This choice ensures that the block-shifted matrices $\bar\Sigma_j$ used in the definition of $\bar\P^\x$ remain consistent with the overbounding of the nonlinear dynamics. Hence, $\|[\bar{\psi}_k,\bar{\P}_{(k)}]\|_\infty \le \bar{\tau}_k$ for $k=0,\ldots,T$, i.e., \eqref{eq:nonlinearity_bounder} is preserved under the shift.

\textbf{2)}
We denote $\bm{\psi}^{\x\star}$, $\bm{\psi}^{\u\star}$, the optimal values of the optimization problem~\eqref{eq:nonlinear_sls} at time step $t$ with initial condition $x(t)$. 
A naive shift of the trajectory $(\bm{\psi}^\x,\bm{\psi}^\u)$ is not feasible for~\eqref{eq:nonlinear_sls_nom_ic}. Hence, we define $\bar w\in \B^\nx$ as an equivalent disturbance such that
\begin{equation}
    {\P}^{\x\star}_{1,0} \bar w \defmath  x(t+1) - z_1^\star -  \psi^{\x\star}_{1}.
\label{eq:w_bar}
\end{equation}

By construction, the difference $x(t+1) - \bigl(z_1^\star + \psi^{\x\star}_{1}\bigr)$ corresponds exactly to the deviation between the nonlinear system and its linearized prediction.  
Since this deviation is bounded componentwise by $\sigma(z_0^\star,v_0^\star,\tau_0)$ (cf. \eqref{eq:nonlinear_sls_Px}), there always exists a disturbance realization $\bar w \in \B^\nx$ such that \eqref{eq:w_bar} holds.

The following sequence is proposed as a feasible candidate:
\begin{equation}
\begin{aligned}
    \bar \psi_0^\x   &= x(t+1) - z_1^\star,\\
    \bar \psi_k^\u   &= {\psi}_{k+1}^{\u\star} + {\P}_{k,0}^{\u\star} \bar w, && k = 1, \ldots, T-1,\\
    \bar \psi_k^\x   &= {\psi}_{k+1}^{\x\star} + {\P}_{k,0}^{\x\star} \bar w, && k = 1, \ldots, T-1,\\
    \bar \psi^\x_T   &= A_\textrm{cl} \bar \psi^\x_{T-1},\\
    \bar \psi^\u_{T-1} &= K_\f \bar \psi^\x_{T-1},
\end{aligned}
\label{eq:feas_psi}
\end{equation}
where $\bar w$ is given by~\eqref{eq:w_bar} and $x(t+1)$ by~\eqref{eq:nonlinear_dyn} with optimal input \eqref{eq:optimal_input}.  
Multiplying \eqref{eq:slp_single_matrix} for $j=0$ by $\bar w$ yields
\begin{equation}
\begin{aligned}
    \P_{k+1,0}^{\x\star} \bar w = A(z_k^\star,v_k^\star) \P_{k,0}^{\x\star} \bar w + B(z_k^\star,v_k^\star) \P_{k,0}^{\u\star} \bar w,
\end{aligned}
\end{equation}
which, by the principle of superposition, shows that $\bar\psi^\x, \bar\psi^\u$ satisfy\eqref{eq:sls_LTV_IC}.

\textbf{3)}
Lastly, we write the reachable sets of the proposed feasible candidate solution as

    \begin{equation}
\bar{\mathcal{R}}_{ k}^\x = \{\bar z_k + \bar{\psi}^\x_k \} \bigoplus_{j=0}^{k-1}  \bar{\P}^{\x}_{k,j} \B^{\nx}.
\end{equation}%
Likewise, we denote
\begin{equation}
{\mathcal{R}}_{ k+1}^{\x\star } = \{ z_{k+1}^\star+ {\psi}^{\x\star}_{k+1} \} \bigoplus_{j=0}^{k}  \P^{\x\star}_{k+1,j} \B^{\nx}.
\end{equation}

The tube inclusion of the reachable set, as required to show recursive feasibility, writes
    \begin{equation}
        \bar{\mathcal{R}}_{ k}^\x  \subseteq \mathcal{R}^{\star\x }_{ k+1}, ~k = 0, \ldots, T-1,
\label{eq:set_inclu_x}
    \end{equation}
    \begin{equation}
        \bar{\mathcal{R}}_{ k}^\u  \subseteq \mathcal{R}^{\star\u }_{ k+1}, ~k = 0, \ldots, T-2,
        \label{eq:set_inclu_u}
    \end{equation}
Since the nonlinear trajectory is shifted as per~\eqref{eq:feas_nom} with
$\bar z_k = z_{k+1}^\star$, and $\bar v_k = v_{k+1}^\star$,
the inclusions~\eqref{eq:set_inclu_x} simplify to
\begin{equation}
    \{\bar{\psi}^\x_k \} \bigoplus_{j=0}^{k-1}  \bar{\P}^{\x}_{k,j} \B^{\nx}\subseteq \{ {\psi}^{\x\star}_{k+1} \} \bigoplus_{j=0}^{k}  \P^{\x\star}_{k+1,j} \B^{\nx}.
    \label{eq:set_inclusion_proof}
\end{equation}
As per the construction of $\bar{\P}_{\x}$ in~\eqref{eq:feas_phi_xw}, each block of the feasible system response corresponds, by design, to a laterally shifted block of the optimal solution
\begin{equation}
    \bar{\P}^{\x}_{k,j} = \P^{\x\star}_{k+1,j+1}
    \label{eq:mat_feas_phi_x_shifted}
\end{equation}
for $k = 1, \ldots, T-1$ and $j = 0, \ldots, k-1$. 

Hence, the set inclusion~\eqref{eq:set_inclusion_proof} is equivalent to
\begin{equation}
        \{ \bar{\psi}^\x_k \} \in \{{\psi}^{\x\star}_{k+1} \} \oplus  {\P}_{(k+1,0)}^{\x\star} \B^{\nx},
\label{eq:proof:lemma43}
\end{equation}
which is satisfied as per construction of $\bar \psi^\x$ in ~\eqref{eq:feas_psi} since $\bar w \in \B^{\nx}$ as per Eq. \eqref{eq:w_bar}. Satisfaction of the input constraints follows analogously.
   Hence, it is guaranteed that the reachable sets~\eqref{eq:reach} satisfy~\eqref{eq:set_inclu_x}--\eqref{eq:set_inclu_u} and thus the proposed candidate sequence satisfies the constraints~\eqref{eq:cons_nonlinear_SLS}. 
As for the terminal constraints \eqref{eq:terminal_set}, we need to show that $ \bar{\psi}^\x_T \oplus \bar{\P}^{\x}_{(T)} \B^{\nx T} \subseteq \mathcal{X}_\f$.
Using the definition of $\bar \psi^\x_T$ from \eqref{eq:feas_psi} and the definition of $\bar{\P}^{\x}_{(T)}$ from \eqref{eq:feas_phi_xw} with $\bar{w}\in\B^{\nx}$, we have
\begin{align*}
&\bar{\psi}^\x_T \oplus \bar{\P}^{\x}_{(T)} \B^{\nx T} \\
\subseteq &A_\textrm{cl}\left[{\psi}^{\x\star}_T\bigoplus_{j=0}^{T-1}\P^{\x\star}_{T,j}\B^{\nx}\right]\oplus \bar{\P}^\x_{T, T-1} \B^{\nx}\\
\subseteq& A_\textrm{cl}\mathcal{X}_\f \oplus \Sigma_\f\B^{\nx}
\subseteq \mathcal{X}_\f,
\end{align*}
where the penultimate step used the fact that ${\psi}^{\x\star}_T \oplus {\P}^{\x\star }_{(T)} \B^{\nx T} \subseteq \mathcal{X}_\f$ and the last step follows from Assumption~\ref{assum:terminal_set}.
Finally, we need to show that the constraint \eqref{eq:nonlinearity_bounder}, which bounds the linearization error via $\tau$, is satisfied by the proposed candidate solution, i.e., that the following inclusion holds $k = 0, \ldots, T-1$
\begin{equation}
\begin{aligned}
    \|[\bar{\psi}_k, \bar{\P}_{(k)} ]\|_\infty\le   \|[ \psi_{k+1}^{\star}, {\P}_{(k+1)}^{\star } ]\|_\infty \le \tau_{k+1}^\star = \bar \tau_k.
    \label{eq:proof_inclu_tau}
\end{aligned}
\end{equation}

We can use~\eqref{eq:mat_feas_phi_x_shifted} and the definition of the infinity norm, to simplify the expression~\eqref{eq:proof_inclu_tau} to
\begin{equation}
\label{eq:proof_inclu_psi_phi}
        \|\bar \psi_k \|_\infty\le   \|[ \psi_{k+1}^{\star}, {\P}^{\star}_{k+1,0} ]\|_\infty.
\end{equation}
The inequality is satisfied as per~\eqref{eq:proof:lemma43}.

Thus, recursive feasibility follows from the shifted candidate solution, which remains feasible at the next step.  
Constraint satisfaction is preserved since the tightened inequalities hold for all admissible disturbances.  

\hfill$\blacksquare$
\end{document}